

\documentclass[11pt]{article} 

\usepackage[utf8]{inputenc} 


\usepackage{geometry} 
\geometry{a4paper} 

\usepackage{graphicx} 


\usepackage{booktabs} 
\usepackage{array} 
\usepackage{paralist} 
\usepackage{verbatim} 
\usepackage{subfig} 

\usepackage{fancyhdr} 
\pagestyle{fancy} 
\lhead{}\chead{}\rhead{}
\lfoot{}\cfoot{\thepage}\rfoot{}

\usepackage{sectsty}
\allsectionsfont{\sffamily\mdseries\upshape} 

\usepackage[nottoc,notlof,notlot]{tocbibind} 
\usepackage[titles,subfigure]{tocloft} 


\usepackage{algorithm}
\usepackage[noend]{algpseudocode}

\usepackage{amssymb}
\usepackage{amsmath,amsthm}
\usepackage{amsfonts}

\usepackage{tikz,pgfplots,pgf}
\usepackage{neuralnetwork}
\usepackage{tikz}

\newtheorem{theorem}{Theorem}[section]

\newtheorem{proposition}[theorem]{Proposition}
\newtheorem{definition}[theorem]{Definition}
\newtheorem{corollary}{Corollary}

\newtheorem{remark}[theorem]{Remark}



\begin{document}

\begin{center}
\textbf{\Large On a Class of Non-linear Differential Equations Arising from Branching Diffusions}
 \end{center}
\begin{center}
Erfan Salavati \footnote{Email: erfan.salavati@aut.ac.ir}\\
\end{center}

\begin{center}

\emph{\footnotesize Department of Mathematics and Computer Science, Amirkabir University of Technology}

\end{center}

\begin{abstract}
A non-linear differential equation arising from a stochastic process known as branching Brownian motion is considered. We find an explicit solution and show the uniqueness of the solution under some boundedness conditions using probabilistic ideas. We discuss non-negative solutions. We also generalize this idea to a class of non-linear parabolic differential equations which we call probabilistic parabolic equations.
\end{abstract}

\noindent \textbf{Keywords}: Parabolic equations, Branching diffusions

\noindent\textbf{Mathematics Subject Classification (2010):} 60J85, 35K10.

\section{Introduction}

It is well-known that probabilistic methods are very useful in the study of linear partial differential equations (see~\cite{Oksendal}). But their usefulness in the study of non-linear partial differential equations have less been noticed in the literature. As diffusion processes can be thought as the microscopic level of linear parabolic equations, branching diffusion processes can be thought as the microscopic level of some linear parabolic equations. This gives us probabilistic interpretations for some complicated non-linear PDEs (see~\cite{Dynkin}).

In this article we wish to show that how one can use branching diffusion processes to study uniqueness properties of a class of non-linear PDEs. In section~\ref{section: equation} we study a specific parabolic PDE on $\mathbb{R}$ and introduce the main ideas of our method. In section~\ref{section: uniqueness} we prove one of our main results on uniqueness of the solution of the equation introduced in section~\ref{section: equation}. In section~\ref{section: non-negative} we discuss and prove a result on non-negative solutions. In section~\ref{section: general} we generalise the ideas of the previous sections and prove the same results for a more general class of PDEs which we call probabilistic parabolic equations.

\section{A Non-linear Differential Equation}\label{section: equation}

Let a particle be at the origin on the real line and assume it performs a standard Brownian motion until a random time which is exponentially distributed with parameter $\mu$ and at that time with probability $\frac{1}{2}$ the particle dies and with probability $\frac{1}{2}$ it gives birth to two particles same as itself. Let $X(t)$ be the (multi-)set of particles at time $t$.

We wish to study probability that some particle eventually reaches an arbitrary point $x$. We consider the complement probability, that is the probability that no one of the particles reaches $x$. Call this probability $p(x)$. Also, denote by $r(x,t)$ the probability that the particles extinct and never reach the point $x$ and denote by $s(x,t)$ the probability that the particles never reach $x$.

It is clear that $r(x,t) \le p(x) \le s(x,t)$. As is shown in~\cite{Grimmett}, page 522, $s(x,t)$ satisfies the following PDE,
\begin{equation} \label{main_evolution_PDE}
\frac{\partial u}{\partial t} = \frac{1}{2}\frac{\partial^2 u}{\partial x^2}+\frac{\lambda (u-1)^2}{2},\quad t\ge 0,\quad x\ge 0
\end{equation}
with initial and boundary conditions
\[ s(0,t)=0,\quad s(x,0)=1 \]
and a similar argument shows that $r(x,t)$ satisfies~\eqref{main_evolution_PDE} with initial and boundary conditions
\[ r(0,t)=0,\quad r(x,0)=0 \]
and also $p(x)$ satisfies
\begin{equation} \label{main_PDE}
0 = \frac{1}{2}\frac{\partial^2 u}{\partial x^2}+\frac{\lambda (u-1)^2}{2},\quad x\ge 0
\end{equation}
with boundary condition
\[ p(0)=0\]

One can check easily that
\begin{equation} \label{explicit_solution}
u(x)=1-\frac{1}{(\sqrt{\frac{\lambda}{6}} x+1)^2}
\end{equation}
is a solution of~\eqref{main_PDE} with boundary condition $u(0)=0$ and satisfies $0\le u(x)\le 1$. In the next section, we wish to show that this solution is unique and hence $p(x)$ is given by~\eqref{explicit_solution}.

\section{Uniqueness of Solution}\label{section: uniqueness}
In this section we use~\eqref{main_evolution_PDE} to prove the uniqueness of the solution of~\ref{main_PDE}. It is obvious that any solution of~\eqref{main_PDE} is a stationary solution of~\eqref{main_evolution_PDE}.

\begin{theorem}\label{theorem_uniqueness}
	Equation~\eqref{main_PDE} has exactly one solution with conditions $u(0)=0$ and $0\le u(x) \le 1$ which is given by~\eqref{explicit_solution}.
\end{theorem}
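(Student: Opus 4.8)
The plan is to exploit the observation, noted just above the theorem, that any solution $u$ of \eqref{main_PDE} satisfying $u(0)=0$ and $0\le u\le 1$ is a \emph{stationary} solution of the parabolic equation \eqref{main_evolution_PDE}. I would then trap $u$ between the two probabilistic solutions $r(x,t)$ and $s(x,t)$ by a parabolic comparison argument and send $t\to\infty$. The key point is that $u$, viewed as a constant-in-time solution, has initial data $u(x)$ lying between the initial data $r(x,0)=0$ and $s(x,0)=1$, and shares the boundary value $0$ at $x=0$ with both $r$ and $s$.

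The analytic core is a comparison principle for bounded solutions of \eqref{main_evolution_PDE}. Writing $f(v)=\tfrac{\lambda}{2}(v-1)^2$, if $v$ and $w$ are two solutions taking values in $[0,1]$ with $v(\cdot,0)\ge w(\cdot,0)$ and $v(0,\cdot)\ge w(0,\cdot)$, then $z=v-w$ satisfies the linear equation
\[
\frac{\partial z}{\partial t}=\frac12\frac{\partial^2 z}{\partial x^2}+c(x,t)\,z,
\qquad
c(x,t)=\frac{f(v)-f(w)}{v-w},
\]
where, by the mean value theorem and the fact that $f\in C^1$ with arguments confined to $[0,1]$, the coefficient $c$ is bounded. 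Substituting $z=e^{Kt}\tilde z$ with $K$ large removes the sign ambiguity of the zeroth-order term, after which the classical maximum principle yields $\tilde z\ge 0$, hence $z\ge 0$, i.e.\ $v\ge w$. I expect the main obstacle to be the \emph{unbounded} spatial domain $x\ge 0$: one must exclude loss of the extremum at $x=+\infty$, which I would handle using the uniform bound $0\le v,w\le 1$ together with a barrier function, that is, a Phragm\'en--Lindel\"of type argument.

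Applying this comparison twice gives, for every $t\ge 0$,
\[
r(x,t)\le u(x)\le s(x,t),
\]
since the stationary solution $u$ lies between the initial data of $r$ and of $s$ and matches their common boundary condition at $x=0$. It then remains to let $t\to\infty$. Because the branching is critical (death and binary splitting are equally likely, so the mean offspring number is $1$), the population becomes extinct almost surely; consequently $r(x,t)\uparrow p(x)$ and $s(x,t)\downarrow p(x)$ as $t\to\infty$, in agreement with the standing inequality $r(x,t)\le p(x)\le s(x,t)$. Squeezing then forces $u(x)=p(x)$ for all $x\ge 0$. Finally, since the explicit function \eqref{explicit_solution} is itself a solution of \eqref{main_PDE} with $u(0)=0$ and $0\le u\le 1$, it must coincide with $p(x)$; therefore it is the unique such solution, which is precisely the assertion of the theorem.
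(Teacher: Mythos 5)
Your proposal is correct, and its global strategy is exactly that of the paper: regard $u$ as a stationary solution of \eqref{main_evolution_PDE}, trap it by a parabolic comparison argument between the probabilistic solutions, $r(x,t)\le u(x)\le s(x,t)$, then let $t\to\infty$ and use almost-sure extinction of the critical branching process to squeeze both bounds to $p(x)$; the explicit formula \eqref{explicit_solution}, being itself such a solution, then identifies $p$. Where you genuinely differ is in how the comparison lemma is proved. The paper argues directly on $v=u-r$: at a hypothetical negative interior minimum one has $u<r\le 1$, hence $(u-1)^2>(r-1)^2$, so $\bigl(\partial_t-\tfrac12\partial_x^2\bigr)v>0$ there, contradicting the maximum principle; the unbounded domain is handled by appealing to limits as $x\to\infty$ (the paper asserts $u(x)\to 1$ and $r(x,t)\to 1$) so that a negative infimum must be attained at an interior point. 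You instead linearize: $z=v-w$ solves a linear parabolic equation whose zeroth-order coefficient works out to $c=\tfrac{\lambda}{2}(v+w-2)$, bounded since $v,w\in[0,1]$, after which the tilt $z=e^{Kt}\tilde z$ and the classical maximum principle plus a Phragm\'en--Lindel\"of barrier finish the job. Your version buys two things. First, it uses only the Lipschitz property of the nonlinearity on the range of the solutions, not the monotonicity of $(u-1)^2$ on $(-\infty,1]$, so it transfers verbatim to the general probabilistic parabolic equations of Section~\ref{section: general}. Second, the barrier argument avoids any claim about behavior at spatial infinity; this matters because the paper's assertion $\lim_{x\to\infty}r(x,t)=1$ is in fact false for finite $t$ (that limit is the probability of extinction by time $t$), although the paper's argument survives since all it really needs is that this limit is nonnegative, so the negative infimum is still attained in the interior. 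The paper's argument, in exchange, is shorter and requires no linearization machinery.
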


\begin{proof}

Note that
\[ r(x,0) \le u(x) \le s(x,0) \]
We wish to show that for any $t\ge 0$,  $r(x,t) \le u(x) \le s(x,t)$. We do this using the maximum principle.
First, notice that since $u$ satisfies~\eqref{main_PDE}, we have $\lim_{x\to\infty} u(x) = 1$, and also $\lim_{x\to\infty} r(x,t) = 1$.
Let $v(x,t)=u(x)-r(x,t)$. Suppose that there exists $(x_0,t_0)$ such that $v(x,t)<0$. Hence $v$ attains its minimum on $[0,\infty)\times [0,t_0]$ at some interior point and there we have $v(x_1,t_1)<0$ which implies $u(x_1,t_1)< r(x_1,t_1)$ and therefore $(u(x_1,t_1)-1)^2 > (r(x_1,t_1)-1)^2$. Therefore $L v(x_1,t_1)>0$ where $L=\partial_t-\partial_x^2$. Hence by maximum principle (\cite{Evans} Theorem 11.page 375) $v$ is constant which is impossible. Hence we have proved $r(x,t) \le u(x)$. A similar argument implies $u(x)\le s(x,t)$.

On the other hand, it is clear by definition of $r$ and $s$ and the fact that the critical branching processes extinct with probability 1, that as $t\to\infty$,
\[ r(x,t) \to p(x) \quad s(x,t)\to p(x) \]
which implies $u(x)=p(x)$.

\end{proof}

\section{Non-negative Solutions}\label{section: non-negative}
In this section we study the non-negative solutions to~\eqref{main_PDE}.

Consider a process $Z(t)$ which has a minor difference with $X(t)$ in that when the particles reach the origin, they stop there forever. Let $f:\mathbb{Z}\to \mathbb{R}$ be a function and define
\[ q^f(x,t) := \mathbb{E}^{\{x\}} (\prod_{y\in Z(t)} f(y)) \]
which by ${E}^{\{x\}}$ we mean that we are starting with a particle at $x$.

We may omit the superscript $f$ in case that there is no ambiguity. By an argument similar to~\cite{Grimmett}, page 522, we find that $q$ satisfies the equation~\eqref{main_evolution_PDE} and we have $q(0,t)=0$.

\begin{proposition}
For any non-negative solution $u(x)$ of equation~\eqref{main_evolution_PDE}, we have
\[ p(x) \le u(x) \]
\end{proposition}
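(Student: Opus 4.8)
The plan is to exploit the multiplicative functional $q^u$ built from the absorbed process $Z$: first show that $u$ is reproduced by the flow, $q^u(x,t)=u(x)$ for every $t\ge 0$, and then let $t\to\infty$. The guiding observation is that reflecting the line about $x/2$ (a rigid motion under which Brownian motion and the position-independent branching are both invariant) exchanges the point $0$ and the target $x$, so that
\[ p(x)=\mathbb{P}^{\{x\}}(\text{no particle ever reaches } 0). \]
Denote this event by $B$, and write $M(t):=\prod_{y\in Z(t)}u(y)$, which is non-negative because $u\ge 0$ and is an a.s.\ finite product since the branching population is a.s.\ finite at each finite time.

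First I would show $q^u(x,t)=u(x)$ for all $t\ge 0$, taking $f=u$ and using that the relevant non-negative solutions satisfy $u(0)=0$ (consistent with the stated $q(0,t)=0$). Since $q^u$ solves \eqref{main_evolution_PDE} with initial datum $u$, while $u$ read as a function constant in $t$ is a stationary solution of the same equation, the difference $v:=q^u-u$ has zero initial and boundary data and, using the factorisation
\[ (q^u-1)^2-(u-1)^2=(q^u-u)(q^u+u-2), \]
satisfies the linear parabolic equation $\partial_t v=\tfrac12\partial_x^2 v+\tfrac{\lambda}{2}(q^u+u-2)\,v$. The maximum principle used in the proof of Theorem~\ref{theorem_uniqueness} then forces $v\equiv 0$, i.e.\ $q^u(x,t)=u(x)$; equivalently, $M(t)$ has constant expectation $\mathbb{E}^{\{x\}}[M(t)]=u(x)$.

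Next I would analyse the limit $t\to\infty$. On $B$ no particle is ever absorbed at the origin, so the active population is a critical branching process and dies out almost surely; hence $Z(t)=\varnothing$ eventually and $M(t)\to 1$ (empty product). On $B^{c}$ some particle is frozen at the origin, contributing the factor $u(0)=0$, so $M(t)=0$ from the first absorption time onward. Thus $M(t)\to\mathbf{1}_{B}$ almost surely, and since $M(t)\ge 0$ Fatou's lemma gives
\[ p(x)=\mathbb{P}^{\{x\}}(B)=\mathbb{E}^{\{x\}}\Big[\lim_{t\to\infty}M(t)\Big]\le\liminf_{t\to\infty}\mathbb{E}^{\{x\}}[M(t)]=u(x), \]
which is the assertion. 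Only the one-sided Fatou bound is available in general, which is exactly why the conclusion is an inequality and why the bounded solution \eqref{explicit_solution} is the minimal one.

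The step I expect to be the main obstacle is the identity $\mathbb{E}^{\{x\}}[M(t)]=u(x)$ for a non-negative solution not assumed bounded: the maximum-principle argument relies on control of $v$ at spatial infinity, so for possibly unbounded $u$ one must either impose a growth condition or instead establish the multiplicative martingale property and its constant expectation through a direct generator and uniform-integrability argument. Once constant expectation is secured, the extinction and Fatou steps are routine.
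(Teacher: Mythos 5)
Your proposal is correct and follows essentially the same route as the paper: establish $q^u(x,t)=u(x)$ via stationarity, observe that $\prod_{y\in Z(t)}u(y)$ converges almost surely to the indicator of the event that no particle ever reaches $0$ (extinction on that event, a frozen factor $u(0)=0$ off it), and apply Fatou's lemma. Your added details --- the explicit reflection symmetry identifying $p(x)$ with $\mathbb{P}^{\{x\}}(\text{no particle reaches }0)$, and the maximum-principle justification of $q^u=u$ (which the paper dismisses as ``obvious'' and which, as you rightly flag, is the only step needing care for unbounded $u$) --- only make the same argument more rigorous.
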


\begin{proof}
It is obvious that $u$ is a stationary solution of equation~\eqref{main_evolution_PDE}, i.e. $q^u(x,t)=u(x)$. On the other hand
\[ q^u(x,t) := \mathbb{E}^{\{x\}} (\prod_{y\in Z(t)} u(y)) \]
\[ = \mathbb{E}^{\{x\}} (\prod_{y\in Z(t)} u(y) 1_{\text{No particle reaches $0$ until time $t$}})\]
Since the population extinct with probability 1, hence the expression inside the expectation converges almost surely to the characteristic function of the event that no particle reaches $0$ forever. Therefore by the Fatou's lemma,
\[ \liminf_{t\to\infty} q^u(x,t) \ge \mathbb{P}^{\{x\}} (\text{No particle reaches $0$} ) = p(x) \]
hence the proof is complete.
\end{proof}

Hence,
\begin{corollary}\label{corollary:non-negative}
$p(x)$ is the least non-negative solution of~\eqref{main_PDE}.
\end{corollary}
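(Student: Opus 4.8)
The plan is to recognize that the assertion ``$p(x)$ is the least non-negative solution'' unpacks into two separate claims: first, that $p$ is itself a non-negative solution of~\eqref{main_PDE}; and second, that $p$ lies pointwise below every other non-negative solution. The second of these is exactly the content of the preceding Proposition, so the only genuine work is to confirm the first claim and then assemble the two pieces.

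For the first claim I would appeal directly to Theorem~\ref{theorem_uniqueness}. That theorem identifies $p(x)$ with the explicit function~\eqref{explicit_solution}, which by construction solves~\eqref{main_PDE} with $p(0)=0$ and satisfies $0\le p(x)\le 1$; in particular $p$ is non-negative. Thus $p$ genuinely belongs to the class of non-negative solutions over which the Proposition quantifies, rather than being merely a comparison object external to that class.

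Combining the two observations then closes the argument in one line: $p$ is a non-negative solution, and by the Proposition every non-negative solution $u$ satisfies $p(x)\le u(x)$ for all $x\ge 0$; hence $p$ is a non-negative solution dominated pointwise by every non-negative solution, which is precisely the statement that it is the least one.

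I do not anticipate a real obstacle, since the corollary is a direct bookkeeping consequence of results already established. The single point meriting care is keeping the boundary normalization consistent: the non-negative solutions appearing in the Proposition are tacitly those with $u(0)=0$ (this is what forces $q^u(0,t)=0$ in its proof, through absorption of $Z$ at the origin), and since $p$ shares this boundary value the comparison $p\le u$ is being applied within one well-defined class. Making this implicit hypothesis explicit is the only step where one could otherwise slip.
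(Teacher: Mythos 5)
Your proposal is correct and matches the paper's (implicit) argument: the corollary is obtained exactly by combining the Proposition's comparison $p\le u$ with the fact, already established in Section~\ref{section: equation} and Theorem~\ref{theorem_uniqueness}, that $p$ itself is a non-negative solution of~\eqref{main_PDE} with $p(0)=0$. Your remark that the Proposition tacitly concerns solutions with $u(0)=0$ (needed so that particles absorbed at the origin kill the product $\prod_{y\in Z(t)}u(y)$) is a valid and worthwhile clarification of a hypothesis the paper leaves implicit.
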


\section{General Differential Equations}\label{section: general}

\begin{definition}
By a \textit{polynomial parabolic equation} we mean a differential equation in the form
\[ \frac{\partial u}{\partial t} = \frac{1}{2}\frac{\partial^2 u}{\partial x^2} + F(u) \]
where $F$ is a polynomial with real coefficients. If we have $F(u)=\lambda(G(u)-u)$ where $G$ has non-negative coefficients then the polynomial parabolic equation is called \textit{positive} and if moreover the sum of the coefficients of $G$ is 1, it is called \textit{probabilistic}.
\end{definition}

\begin{remark}
	 Many positive polynomial parabolic equations can be transformed to probabilistic polynomial parabolic equations by linear transformations.
\end{remark}

We wish to study the following probabilistic polynomial parabolic equation with initial condition:
\begin{eqnarray}\label{eq:general_PDE}
	\frac{\partial u}{\partial t} = \frac{1}{2}\frac{\partial^2 u}{\partial x^2} u + \lambda(G(u)-u), x\ge 0\\
	u(0,t)=0.
\end{eqnarray}

We distinguish three cases:

\begin{definition}
A probabilistic polynomial parabolic equation is called critical, sub-critical, super-critical, respectively if $G^\prime(0)$ is equal to, less than or more than 1.
\end{definition}

To any probabilistic polynomial recurrence relation there corresponds a spatial branching process in the following way:

Let a particle be at the origin on the real line and assume it performs a standard Brownian motion until a random time which is exponentially distributed with parameter $\lambda$ and at that time it dies and with probability $a_n$ gives birth to $n$ particles same as itself, where $a_n$ is the coefficient of $x^n$ in $G$. Let $X(t)$ be the (multi-)set of particles at time $t$. Let $p(x)$ be the probability that no one of the particles reaches $x$. By a similar argument as Theorem \ref{theorem_uniqueness} we find,

\begin{theorem}\label{theorem_uniqueness_general}
	In the critical and sub-critical cases, equation~\eqref{eq:general_PDE} has exactly one solution with conditions $u(0)=0$ and $0\le u(x) \le 1$ which is $p(x)$.
\end{theorem}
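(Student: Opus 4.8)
The plan is to run the proof of Theorem~\ref{theorem_uniqueness} with the branching process attached to $G$ in place of the binary one. First I would introduce the two comparison functions adapted to this process: let $s(x,t)$ be the probability that, started from $x$, no particle has reached $0$ by time $t$, and let $r(x,t)$ be the probability that the population has died out by time $t$ without any particle ever having reached $0$. A first-step decomposition over the initial exponential holding time (the same computation that yields the equation in \eqref{eq:general_PDE}) shows that both $r$ and $s$ solve the parabolic equation $\partial_t w=\tfrac12\partial_x^2 w+F(w)$ with $F(w)=\lambda(G(w)-w)$, subject to $r(\cdot,0)\equiv 0$, $s(\cdot,0)\equiv 1$ and $r(0,t)=s(0,t)=0$. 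Since any admissible solution $u$ of \eqref{eq:general_PDE} is a time-independent solution of this evolution equation, writing $L=\partial_t-\tfrac12\partial_x^2$ we have $Lu=F(u)$, $Lr=F(r)$ and $Ls=F(s)$.

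The heart of the argument is the sandwich $r(x,t)\le u(x)\le s(x,t)$ for all $t\ge0$, proved by the parabolic maximum principle exactly as in Theorem~\ref{theorem_uniqueness}; only the sign of the reaction term needs a new input. For $v=u-r$ one finds $Lv=F(u)-F(r)=\lambda\big[(G(u)-G(r))-(u-r)\big]$, which by the mean value theorem equals $\lambda\,(r-u)\,(1-G'(\xi))$ for some $\xi$ lying between $u$ and $r$. At a putative interior negative minimum of $v$ we have $u<r$, so the sign of $Lv$ is decided by whether $G'(\xi)<1$. This is exactly where criticality enters. Almost-sure extinction in the critical and sub-critical cases is equivalent to $1$ being the least fixed point of $G$ in $[0,1]$, i.e. to $G(\sigma)>\sigma$ for all $\sigma\in[0,1)$; together with $G(1)=1$ this gives $G(1)-G(\sigma)<1-\sigma$, and since $G$ is convex (it is a probability generating function) we conclude $G'(\xi)\le G'(1)\le1$, with strict inequality for $\xi<1$ in every non-degenerate case. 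Hence $Lv>0$ at the supposed minimum, contradicting the maximum principle, so $v\ge0$; the symmetric computation for $w=s-u$ gives $u\le s$.

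Two boundary issues remain. To close the maximum principle on the unbounded half-line I would first show $\lim_{x\to\infty}u(x)=1$ for every admissible solution: in stationary form \eqref{eq:general_PDE} reads $u''=2\lambda\,(u-G(u))$, and the same inequality $G(\sigma)>\sigma$ on $[0,1)$ shows that $u$ is strictly concave wherever it stays below $1$; a bounded, concave solution issuing from $u(0)=0$ and confined to $[0,1]$ can only climb monotonically to the equilibrium value $1$. This makes the parabolic-boundary data of $v$ and $w$ non-negative, including in the limit $x\to\infty$ (which one can make rigorous by working on $[0,R]$ and letting $R\to\infty$). With the sandwich in hand I would finally let $t\to\infty$: $s(x,t)$ decreases to $p(x)$ by its very definition, while $r(x,t)$ increases to $p(x)$ precisely because the population becomes extinct almost surely, so that the event ``extinct and never reaching $0$'' exhausts the event ``never reaching $0$''. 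Squeezing $u$ between the two limits yields $u\equiv p$.

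The main obstacle, relative to Theorem~\ref{theorem_uniqueness}, is that both analytic inputs that were free in the quadratic model must now be recovered from the structure of $G$. There the inequality $(u-1)^2>(r-1)^2$ was immediate and the limit $u\to1$ came from the explicit formula \eqref{explicit_solution}; here the reaction-term bound $G'(\xi)<1$ and the limit $u(x)\to1$ both have to be deduced from the convexity of $G$ and from criticality in the single guise $G(\sigma)>\sigma$ on $[0,1)$. It is precisely this property that fails in the super-critical case---there $1$ is no longer the least fixed point, extinction has positive failure probability, and $r(x,t)$ no longer rises to $p(x)$---which is why the theorem is confined to the critical and sub-critical regimes.
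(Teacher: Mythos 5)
Your proposal is correct and is exactly the route the paper intends: its entire proof of Theorem~\ref{theorem_uniqueness_general} is the remark ``by a similar argument as Theorem~\ref{theorem_uniqueness}'', i.e.\ the sandwich $r(x,t)\le u(x)\le s(x,t)$ established by the parabolic maximum principle, followed by letting $t\to\infty$ and using almost-sure extinction of (sub)critical branching --- precisely your plan. Your two supplementary inputs, the mean-value/convexity bound $G'(\xi)\le G'(1)\le 1$ standing in for the explicit inequality $(u-1)^2>(r-1)^2$ of the quadratic case, and the concavity argument yielding $\lim_{x\to\infty}u(x)=1$ in place of the appeal to the explicit solution~\eqref{explicit_solution}, are exactly the details the paper's one-line proof leaves implicit (and you correctly read criticality as a condition on $G'(1)$; the paper's definition says $G'(0)$, evidently a typo, since $G'(0)\le 1$ would not give almost-sure extinction).
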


\begin{remark}
In the super-critical case the story is a bit different. The same argument as Theorem \ref{theorem_uniqueness} implies that any solution $0\le u(x) \le 1$ satisfies,
\[ \mathbb{P}(\text{The population extincts and never reaches $x$}) \le u(x) \le p(x)\]
\end{remark}

A similar argument as Corollary~\ref{corollary:non-negative} shows that

\begin{corollary}\label{corollary:non-negative_general}
	In the critical and sub-critical cases, $p(x)$ is the least non-negative solution of~\eqref{eq:general_PDE}.
\end{corollary}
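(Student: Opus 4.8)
The plan is to follow the two-step structure used for Corollary~\ref{corollary:non-negative}: exhibit $p(x)$ as a non-negative solution, and then show it is dominated by every non-negative solution. The first step is immediate from Theorem~\ref{theorem_uniqueness_general}, which in the critical and sub-critical cases identifies $p(x)$ as a solution of~\eqref{eq:general_PDE} satisfying $p(0)=0$ and $0\le p(x)\le 1$; in particular $p$ is itself a non-negative solution. It therefore remains to prove the general analogue of the Proposition, namely that every non-negative solution $u$ of~\eqref{eq:general_PDE} satisfies $p(x)\le u(x)$.

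For this I would reproduce the probabilistic argument of the Proposition, now using the spatial branching process $X(t)$ and its variant $Z(t)$ absorbed at the origin, associated to $G$ via the offspring law that assigns probability $a_n$ to $n$ children. The key structural input is the generalisation of the computation of~\cite{Grimmett}: the generating functional $q^f(x,t)=\mathbb{E}^{\{x\}}\big(\prod_{y\in Z(t)} f(y)\big)$ solves the evolution form of~\eqref{eq:general_PDE}, namely $\partial_t q=\tfrac12\partial_x^2 q+\lambda(G(q)-q)$, with $q(0,t)=0$ whenever $f(0)=0$. Given this, a non-negative time-independent solution $u(x)$ of~\eqref{eq:general_PDE} is in particular a stationary solution of the evolution dynamics, whence $q^u(x,t)=u(x)$ for all $t\ge 0$. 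Using the boundary condition $u(0)=0$ together with the absorption of $Z$ at the origin, the factor $u(0)=0$ forces $\prod_{y\in Z(t)}u(y)$ to vanish on the event that some particle has reached $0$ by time $t$, so that $q^u(x,t)=\mathbb{E}^{\{x\}}\big(\prod_{y\in Z(t)}u(y)\,\mathbf 1_{\{\text{no particle reaches }0\text{ by }t\}}\big)$.

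The decisive point is almost-sure extinction, which holds precisely in the critical and sub-critical cases and is what excludes the super-critical regime. Extinction guarantees that as $t\to\infty$ the integrand converges almost surely to $\mathbf 1_{\{\text{no particle ever reaches }0\}}$: on any trajectory either some particle eventually hits $0$, making the product identically zero through the factor $u(0)=0$, or no particle ever does, in which case the population dies out and the product reduces to the empty product, equal to $1$. Since $u\ge 0$, Fatou's lemma then yields $u(x)=\liminf_{t\to\infty}q^u(x,t)\ge \mathbb{P}^{\{x\}}(\text{no particle reaches }0)=p(x)$. Combining this with the first step shows that $p(x)$ is a non-negative solution lying below every non-negative solution, i.e. the least non-negative solution, which is the claim.

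I expect the main obstacle to be the justification of almost-sure extinction and of the resulting almost-sure convergence of the product; this is exactly where the critical/sub-critical hypothesis is used and where the super-critical case genuinely fails. A secondary technical point is confirming the generalised evolution equation for $q^f$ and the identity $q^u(x,t)=u(x)$ for stationary $u$, both of which rest on the branching-property derivation analogous to~\cite{Grimmett} and on the uniqueness for the evolution equation taken for granted in the preceding sections.
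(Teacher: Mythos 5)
Your proposal is correct and follows exactly the route the paper intends: the paper's own ``proof'' is literally the remark that ``a similar argument as Corollary~\ref{corollary:non-negative} shows'' the claim, and you have spelled out precisely that argument --- the absorbed process $Z(t)$, stationarity giving $q^u(x,t)=u(x)$, the indicator rewriting via $u(0)=0$, almost-sure extinction in the critical/sub-critical cases, and Fatou's lemma, combined with Theorem~\ref{theorem_uniqueness_general} to see $p$ is itself a solution. Nothing further is needed.
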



\begin{thebibliography}{9}


\bibitem{Athreya} Athreya, Krishna B., and Peter E. Ney. Branching processes. Vol. 196. Springer Science \& Business Media, 2012.

\bibitem{Le Gall} Le Gall, Jean-Francois. Spatial branching processes, random snakes and partial differential equations. Springer Science \& Business Media, 1999.

\bibitem{Dynkin} Dynkin, Eugene B. "Superprocesses and partial differential equations." The Annals of Probability (1993): 1185-1262.

\bibitem{Grimmett} Grimmett, Geoffrey, and David Stirzaker. Probability and random processes. Oxford university press, 2001.

\bibitem{Evans} Evans, Lawrence C. "Partial differential equations." Graduate Studies in Mathematics 19 (1998).

\bibitem{Oksendal} Oksendal, Bernt. Stochastic differential equations: an introduction with applications. Springer Science \& Business Media, 2013.


\end{thebibliography}
\end{document}